\tikzset{>=stealth',
  head/.style = {fill = white, text=black},
  plaque/.style = {draw, rectangle, minimum size = 9mm, fill=Gainsboro}, 
   Kplaque/.style = {draw, rectangle, minimum size = 9mm, fill=white}, 
  newplaque/.style = {draw=red, ellipse, minimum size = 9mm,ultra thick, fill=white}, 
  posex/.style={->,thick},
  lift/.style={right hook->},
  beta/.style={dashed, <->},
  pil/.style={->,thick},
  junct/.style = {draw,circle,inner sep=0.5pt,outer sep=0pt, fill=black}
  }
\newtheorem{theorem}{Theorem}[section]
\newtheorem{lemma}[theorem]{Lemma}
\newtheorem{proposition}[theorem]{Proposition}
\theoremstyle{definition}
\newtheorem{example}[theorem]{Example}
\theoremstyle{remark}
\newtheorem{remark}[theorem]{Remark}
\numberwithin{equation}{section}
\newcommand{\Des}{\ensuremath{\mathrm{Des}}}
\newcommand{\Sym}{\ensuremath{\mathrm{Sym}}}
\newcommand{\QSym}{\ensuremath{\mathrm{QSym}}}
\newcommand{\NSym}{\ensuremath{\mathrm{NSym}}}
\newcommand{\eS}{\ensuremath{\mathcal{E}}}
\newcommand{\SET}{\ensuremath{\mathrm{SET}}}
\newcommand{\NSET}{\ensuremath{\mathrm{NSET}}}
\newcommand{\StdTab}{\ensuremath{\mathrm{SRIT}}}
\newcommand{\C}{\ensuremath{\mathbb{C}}}
\newcommand{\Set}{\ensuremath{\mathbb{S}}}
\newcommand{\excise}[1]{}
\newlength\cellsize \setlength\cellsize{12\unitlength}
\newcommand\cellify[1]{\def\thearg{#1}\def\nothing{}%
\ifx\thearg\nothing\vrule width0pt height\cellsize depth0pt%
  \else\hbox to 0pt{\usebox2\hss}\fi%
  \vbox to 18\unitlength{\vss\hbox to 18\unitlength{\hss$#1$\hss}\vss}}
\newcommand\tableau[1]{\vtop{\let\\=\cr
\setlength\baselineskip{-12000pt}
\setlength\lineskiplimit{12000pt}
\setlength\lineskip{0pt}
\halign{&\cellify{##}\cr#1\crcr}}}
\begin{document}


\title{Indecomposable $0$-Hecke modules for extended Schur functions}  

\author[D. Searles]{Dominic Searles}
\address[DS]{Department of Mathematics and Statistics, University of Otago, Dunedin 9016, New Zealand}
\email{dominic.searles@otago.ac.nz}

\subjclass[2010]{Primary 05E05, 20C08; Secondary 05E10}

\date{June 11, 2019}


\keywords{$0$-Hecke algebra, extended Schur functions, standard extended tableaux, quasisymmetric characteristic}

\begin{abstract}
The extended Schur functions form a basis of quasisymmetric functions that contains the Schur functions. 
We provide a representation-theoretic interpretation of this basis by constructing $0$-Hecke modules whose quasisymmetric characteristics are the extended Schur functions. We further prove these modules are indecomposable. 
\end{abstract}

\maketitle

%
\section{Introduction}
%
\label{sec:introduction}
The Schur basis of the algebra of symmetric functions $\Sym$ has applications in wide-ranging areas of mathematics, including representation theory of the symmetric group and the geometry of Grassmannians. Important generalizations of $\Sym$ include the quasisymmetric functions $\QSym$ and the noncommutative symmetric functions $\NSym$, which are dual to one another as Hopf algebras. The symmetric functions $\Sym$ can be realised as a subalgebra of $\QSym$ and as a quotient algebra of $\NSym$.

There has been significant recent interest in constructing bases of $\QSym$ and $\NSym$ that generalize or share properties with the Schur functions. Central and well-studied examples include the \emph{quasisymmetric Schur}  \cite{HLMvW11:QS} and \emph{dual immaculate} \cite{BBSSZ:2} bases of $\QSym$, and their dual bases in $\NSym$: the \emph{noncommutative Schur} \cite{BLvW} and \emph{immaculate} bases \cite{BBSSZ:2}, respectively.

The \emph{extended Schur} basis of $\QSym$ was defined in \cite{Assaf.Searles:3} as the stable limits of polynomials arising from application of Kohnert's algorithm \cite{Kohnert} to certain cell diagrams. The nomenclature comes from the fact the extended Schur basis contains the Schur basis of $\Sym$, thus extends the Schur basis to a basis of $\QSym$. It was proved in \cite{Assaf.Searles:3} that extended Schur functions expand positively in the fundamental basis \cite{Gessel} of $\QSym$; dual immaculate and quasisymmetric Schur functions also expand positively in this basis. 
The dual basis to the extended Schur functions is known as the \emph{shin} basis of $\NSym$, introduced in \cite{CFLSX}. The noncommutative Schur, immaculate and shin bases are described in \cite{Campbell} as the three \emph{canonical Schur-like} bases of $\NSym$.

The interpretation of Schur functions as characters of irreducible representations of the symmetric group raises a natural question about potential representation-theoretic interpretations of generalisations of the Schur basis. Recently, in \cite{BBSSZ}, modules of the \emph{$0$-Hecke algebra} were constructed whose \emph{quasisymmetric characteristics} \cite{DKLT} are exactly the dual immaculate quasisymmetric functions. Then in \cite{TvW:1}, $0$-Hecke modules were constructed whose quasisymmetric characteristics are exactly the quasisymmetric Schur functions. In addition, $0$-Hecke modules were constructed in \cite{TvW:1} for the \emph{skew quasisymmetric Schur functions} of \cite{BLvW}, and $0$-Hecke actions on a family of tableaux related to the generalized Demazure atoms of \cite{HMR} were defined in \cite{TvW:2}. Our motivation is to complete the picture for the canonical Schur-like bases by providing a representation-theoretic interpretation of the extended Schur functions.

In this paper, we accomplish this by constructing a $0$-Hecke action on \emph{standard extended tableaux}, and proving that the quasisymmetric characteristics of the corresponding modules are exactly the extended Schur functions. 
We additionally prove these modules are indecomposable. In comparison, the modules for the dual immaculate quasisymmetric functions are also indecomposable \cite{BBSSZ}, while the modules for the quasisymmetric Schur functions are not in general: a direct sum decomposition is given in \cite{TvW:1} whose components are proved to be indecomposable in \cite{Koenig}.

\section{Background}

\subsection{Quasisymmetric functions and noncommutative symmetric functions}

A \emph{composition} is a finite sequence $\alpha = (\alpha_1, \ldots , \alpha_k)$ of positive integers. We call $\alpha_1,\ldots , \alpha_k$ the \emph{parts} of $\alpha$, and when $\alpha$ has $k$ parts we say the \emph{length} $\ell(\alpha)$ of $\alpha$ is $k$. When the parts of  $\alpha$ sum to $n$, we say that $\alpha$ is a composition of $n$, written $\alpha \vDash n$. Compositions of $n$ are in bijection with subsets of $[n-1] = \{1,\ldots , n-1\}$; given a composition $\alpha = (\alpha_1, \ldots , \alpha_k)\vDash n$, define $\Set(\alpha)$ to be the subset $\{\alpha_1, \alpha_1+\alpha_2, \ldots , \alpha_1+\cdots + \alpha_{k-1}\}$ of $[n-1]$. We say a composition $\beta$ \emph{refines} a composition $\alpha$ if $\alpha$ can be obtained by summing consecutive entries of $\beta$.

\begin{example}
Let  $\alpha = (2,1,3) \vDash 6$. Then $\Set(\alpha) = \{2,3\}\subset [5]$. The composition $(2,1,3)$ refines the composition $(2,4)$ but does not refine $(4,2)$.
\end{example}

Let $\C[[x_1, x_2, \dots ]]$ denote the Hopf algebra of formal power series of bounded degree in infinitely many commuting variables. The Hopf algebra $\QSym$ of quasisymmetric functions \cite{Gessel} is the subalgebra of $\C[[x_1, x_2, \dots ]]$ consisting of those formal power series $f$ such that for every composition $\alpha = (\alpha_1, \ldots \alpha_k)$,
\[
  [x_{i_1}^{\alpha_1} \cdots x_{i_{k}}^{\alpha_{k}} \mid  f] = [x_{j_1}^{\alpha_1} \cdots x_{j_{k}}^{\alpha_{k}} \mid  f] 
\]
for any two sequences $1 \leq i_1< \cdots < i_{k}$ and $1 \leq j_1< \cdots < j_{k}$, where $[x_{i_1}^{\alpha_1} \cdots x_{i_{k}}^{\alpha_{k}} \mid  f]$ is the coefficient of the monomial $x_{i_1}^{\alpha_1} \cdots x_{i_{k}}^{\alpha_k}$ in the monomial expansion of $f$.

The \emph{monomial} and \emph{fundamental quasisymmetric functions} $M_\alpha$ and $F_\alpha$ are additive bases of $\QSym$ introduced in \cite{Gessel}. They are indexed by compositions, and defined by
\[M_\alpha = \sum_{i_1< i_2 < \cdots < i_k} x_{i_1}^{\alpha_1} \cdots x_{i_{k}}^{\alpha_{k}} \qquad \mbox{ and } \qquad F_\alpha = \sum_{\beta \text{ refines }\alpha} M_\beta.\]
\begin{example}
Let $\alpha = (1,3,1)$. We have
\[M_{(1,3,1)} = \sum_{i<j<k}x_ix_j^3x_k\]
and
\[F_{(1,3,1)} = M_{(1,3,1)} +  M_{(1,2,1,1)} + M_{(1,1,2,1)} + M_{(1,1,1,1,1)}.\]
\end{example}

The Hopf algebra $\NSym$ of noncommutative symmetric functions \cite{GKLLRT} is an analogue of the symmetric functions in noncommuting variables. It is generated by elements $H_1, H_2, \ldots$ with no relations, and has additive basis $\{H_\alpha\}$ indexed by compositions $\alpha=(\alpha_1, \ldots , \alpha_k)$, where the \emph{complete homogeneous function} $H_\alpha$ is defined to be the product $H_{\alpha_1}\cdots H_{\alpha_k}$. 
As Hopf algebras, $\NSym$ is dual to $\QSym$ via the pairing $\langle H_\alpha, M_\beta\rangle = \delta_{\alpha, \beta}$. The dual basis in $\NSym$ to the fundamental basis $\{F_\alpha\}$ of $\QSym$ is the \emph{ribbon Schur} functions $\{{\bf r}_\alpha\}$.

\subsection{Standard extended tableaux and extended Schur functions}

The diagram $D(\alpha)$ of a composition $\alpha$ is the array of boxes in the plane with $\alpha_i$ boxes in row $i$, left-justified. We depict composition diagrams in French notation, i.e., the bottom row is row $1$. 

\begin{example}
The diagram $D(\alpha)$ of $\alpha=(2,1,3)$ is shown below. 
\[\tableau{ {\ } & {\ } & {\ } \\ {\ }  \\ {\ } & {\ } }\]
\end{example}

A \emph{standard extended tableau} \cite{Assaf.Searles:3} of shape $\alpha$ is a bijective assignment of the integers $\{1,2, \ldots n\}$ to the boxes of $D(\alpha)$, such that the entries in each row of $D(\alpha)$ increase from left to right and the entries in each column of $D(\alpha)$ increase from bottom to top. If $\alpha$ is a \emph{partition}, i.e., $\alpha_1\ge \alpha_2 \ge \cdots \ge \alpha_{\ell(\alpha)}$, then the standard extended tableaux of shape $\alpha$ are exactly the \emph{standard Young tableaux} of shape $\alpha$. We denote the collection of all standard extended tableaux of shape $\alpha$ by $\SET(\alpha)$.

\begin{remark}
The standard extended tableaux defined above are a vertical reflection of the standard extended tableaux defined in \cite{Assaf.Searles:3}, which are fillings of right-justified composition diagrams in which entries decrease from left to right along rows and decrease down columns. 
\end{remark}

\begin{example}\label{ex:SET}
The standard extended tableaux of shape $(2,1,3)$ are shown below.
  \begin{displaymath}
   \begin{array}{c@{\hskip2\cellsize}c@{\hskip2\cellsize}c@{\hskip2\cellsize}c}
T_1 = \tableau{ 4 & 5 & 6 \\ 3  \\ 1 & 2 } & T_2 = \tableau{ 4 & 5 & 6 \\ 2  \\ 1 & 3 } & T_3 = \tableau{ 3 & 5 & 6 \\ 2  \\ 1 & 4 }
\end{array}
  \end{displaymath}
\end{example}

We say an entry $i$ of a standard extended tableau $T$ is a \emph{descent} of $T$ if $i$ is weakly to the right of $i+1$ in $T$. Define the \emph{descent composition} $\Des(T)$ of $T$ to be the composition $\alpha$ such that $\Set(\alpha)$ is the set of all descents of $T$. 

\begin{example}\label{ex:Des}
Consider the three standard extended tableaux from Example~\ref{ex:SET}. The descents of $T_1$ are $2$ and $3$, the descents of $T_2$ are $1$ and $3$, and the descents of $T_3$ are $1$, $2$ and $4$. Hence $\Des(T_1) = (2,1,3)$, $\Des(T_2) = (1,2,3)$ and $\Des(T_3) = (1,1,2,2)$. 
\end{example}

Let $\alpha$ be a composition. In \cite{Assaf.Searles:3}, the \emph{extended Schur functions} $\eS_\alpha$ were defined as the stable limits of polynomials obtained by applying Kohnert's algorithm \cite{Kohnert} to right-justified cell diagrams. The extended Schur functions are quasisymmetric and in fact expand positively in the fundamental basis of $\QSym$ \cite{Assaf.Searles:3}. We take the formula for this expansion as definitional for the extended Schur functions.

\begin{theorem}\label{thm:eStofund}\cite{Assaf.Searles:3}
Let $\alpha$ be a composition. Then
\[\eS_\alpha = \sum_{T\in \SET(\alpha)}F_{\Des(T)}.\]
\end{theorem}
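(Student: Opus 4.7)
The plan is to derive the fundamental expansion from the stable-limit/Kohnert definition via a standard semistandardization-to-standardization argument, so the key is to identify the right intermediate object: a \emph{semistandard} extended tableau whose monomial-weight generating function equals $\eS_\alpha$ and whose standardization fibers are described by descent sets.

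First I would recall from \cite{Assaf.Searles:3} that the Kohnert polynomial associated to the right-justified diagram of $\alpha$ is the generating function $\sum_D \x^{\wt(D)}$ over all diagrams $D$ obtainable from $D(\alpha)$ by iterated Kohnert moves, where $\wt(D)_i$ records the number of cells of $D$ in row $i$. I would then reinterpret these diagrams bijectively as semistandard extended tableaux of shape $\alpha$, that is, fillings of $D(\alpha)$ (reflected to agree with the convention used here) with positive integer entries that weakly increase along rows and strictly increase up columns; the entries of such a tableau record the rows into which cells are moved, and this bijection is weight-preserving. Passing to the stable limit $n\to\infty$ then expresses $\eS_\alpha$ as $\sum_S \x^{\wt(S)}$, where $S$ ranges over all semistandard extended tableaux of shape $\alpha$ with entries in $\{1,2,\dots\}$.

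Next I would introduce the standardization map $\stand$ sending a semistandard extended tableau $S$ to a standard extended tableau $T = \stand(S) \in \SET(\alpha)$, defined in the usual way by reading the $1$s of $S$ left-to-right along rows and labeling them $1,2,\dots$, then the $2$s, and so on. The columns-strict, rows-weak conditions guarantee that $T$ has increasing rows and increasing columns, so $T \in \SET(\alpha)$. Grouping the sum by standardization type gives
\[
\eS_\alpha \;=\; \sum_{T\in\SET(\alpha)} \;\sum_{S:\,\stand(S)=T} \x^{\wt(S)}.
\]
The core claim is then that for fixed $T$ the inner sum equals $F_{\Des(T)}$. I would verify that a semistandard filling $S$ with $\stand(S)=T$ is determined by a weakly increasing sequence of values $v_1\le v_2\le\dots\le v_n$ (where $v_j$ is the entry of $S$ in the box labeled $j$ by $T$) subject to a strict inequality $v_j<v_{j+1}$ precisely at the descents $j\in\Set(\Des(T))$: a descent forces either a column-increase step (strict by semistandardness) or a right-to-left step within a row (strict by the definition of standardization), while at a non-descent the standardization order is compatible with equality. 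This is exactly Gessel's monomial characterization of $F_{\Des(T)}$.

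The main obstacle is the last step: carefully checking that the descent condition on $T$ translates precisely into the strict-inequality pattern in $(v_1,\dots,v_n)$. In particular one must handle the ``weakly to the right'' definition of descent (which covers both the same-row and strict-right-of cases) and confirm it captures exactly those positions where standardization cannot have assigned equal values in $S$. Once this combinatorial equivalence is nailed down, the theorem follows immediately by collecting terms.
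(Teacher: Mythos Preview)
There is nothing to compare against: this paper does not prove Theorem~\ref{thm:eStofund}. It is quoted from \cite{Assaf.Searles:3}, and the sentence immediately preceding it says ``We take the formula for this expansion as definitional for the extended Schur functions.'' So within this paper the identity is a definition, not a theorem with a proof.

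That said, your sketch is a correct outline of a proof, and it is essentially the standard semistandard/standardization argument one would expect in \cite{Assaf.Searles:3}. Two small points. First, the nontrivial input you are borrowing is the bijection between Kohnert diagrams for the right-justified diagram of $\alpha$ and semistandard extended tableaux of shape $\alpha$; this is a genuine result of \cite{Assaf.Searles:3} and should be cited rather than asserted. Second, your phrase ``right-to-left step within a row'' at a descent is slightly off: a descent here means $i$ is weakly to the right of $i+1$, which allows $i$ and $i+1$ to be in different rows and different columns. The correct dichotomy is (a) same column, where strictness is forced by the column condition on $S$, and (b) $i$ strictly right of $i+1$, where $v_i=v_{i+1}$ is impossible because left-to-right standardization among equal entries would then have assigned the smaller label to the left (i.e., to the cell carrying $i+1$). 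With that wording fixed, the fiberwise identification with $F_{\Des(T)}$ goes through, and the non-descent case (allowing $v_i=v_{i+1}$) is immediate since $i$ strictly left of $i+1$ is exactly what left-to-right standardization produces on equal values.
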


\begin{example}
By Examples~\ref{ex:SET} and \ref{ex:Des}, we have
\[\eS_{(2,1,3)} = F_{(2,1,3)} + F_{(1,2,3)} + F_{(1,1,2,2)}.\]
\end{example}

\begin{theorem}\cite{Assaf.Searles:3}
The extended Schur functions $\{\eS_\alpha\}$ form a basis of $\QSym$.
\end{theorem}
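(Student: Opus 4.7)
The plan is to show that for each $n$, the family $\{\eS_\alpha\}_{\alpha \vDash n}$ is related to the fundamental basis $\{F_\alpha\}_{\alpha \vDash n}$ of $\QSym$ (in degree $n$) by a unitriangular change-of-basis matrix, with respect to the total order on compositions of $n$ defined by $\alpha \prec \beta$ iff $\min\bigl(\Set(\alpha)\triangle\Set(\beta)\bigr) \in \Set(\beta)$ (equivalently, lex order on indicator vectors of descent sets). Since $\{F_\alpha\}$ is a basis of $\QSym$, invertibility of this transition matrix in each degree gives the theorem.

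The technical core is the following inductive lemma: if $T \in \SET(\alpha)$ satisfies $\Set(\Des(T)) \cap [k-1] = \Set(\alpha) \cap [k-1]$, then entries $1, 2, \ldots, k$ of $T$ occupy the same cells as in the \emph{super-standard} tableau $T_\alpha$, obtained by filling row $i$ with $\alpha_1+\cdots+\alpha_{i-1}+1, \ldots, \alpha_1+\cdots+\alpha_i$ (for which one checks directly that $\Des(T_\alpha) = \alpha$). In the inductive step, granted that entries $1, \ldots, k-1$ match $T_\alpha$ and that the descent status at $k-1$ agrees, we analyze possible cells for entry $k$: if $k-1 \in \Set(\alpha)$, rows $1, \ldots, r$ are already full and $k$ is forced to begin row $r+1$ at column $1$; if $k-1 \notin \Set(\alpha)$, then $k$ sits immediately to the right of $k-1$ in row $r$. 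Any alternative placement demands smaller entries in cells to the left in some later row (which exist by the left-justification of rows), but those would-be occupants are already placed elsewhere by the inductive hypothesis. The main obstacle is precisely this subtlety arising for shapes with a later row longer than an earlier one, where entry $k$ \emph{a priori} appears able to jump into the longer row; the argument just sketched is the resolution.

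With the lemma in hand, the theorem follows quickly. Setting $k=n$ gives $\Des(T)=\alpha \Rightarrow T=T_\alpha$, so $F_\alpha$ appears in $\eS_\alpha$ with coefficient exactly $1$. For the triangularity of the remaining terms, suppose $T \in \SET(\alpha)$ with $T \neq T_\alpha$, and let $m$ be the smallest entry whose cell differs between $T$ and $T_\alpha$. Entries $1, \ldots, m-1$ coincide, so descents at positions in $[m-2]$ coincide as well; by the lemma's contrapositive some descent disagreement must occur in $[m-1]$, so the disagreement is precisely at $m-1$. A short analysis reusing the placement argument rules out $m-1 \in \Set(\alpha) \setminus \Set(\Des(T))$ (otherwise entry $m$ would be forced to start the next row, hence to match $T_\alpha$), leaving $m-1 \in \Set(\Des(T)) \setminus \Set(\alpha)$. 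Since this is the smallest element of $\Set(\Des(T)) \triangle \Set(\alpha)$, we conclude $\Des(T) \succ \alpha$, establishing the desired unitriangularity.
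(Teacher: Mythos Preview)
The paper does not supply its own proof of this theorem; it is simply cited from \cite{Assaf.Searles:3}. Your argument is therefore not a comparison target but an independent proof, and it is correct. The key lemma---that agreement of $\Set(\Des(T))$ with $\Set(\alpha)$ on $[k-1]$ forces entries $1,\ldots,k$ of $T$ to match $T_\alpha$---goes through as you describe: in Case~B (with $k-1\notin\Set(\alpha)$) any placement of $k$ in a row $s>r$ would require the cell immediately to its left in row $s$ to hold an entry smaller than $k$, impossible since all such entries lie in rows $\le r$; Case~A is similar using column~1. The triangularity step is also sound: if $m-1\in\Set(\alpha)\setminus\Set(\Des(T))$ then rows $1,\ldots,r$ are full and $m$ must sit strictly right of column $\alpha_r$ in some higher row, again forcing a smaller entry to its left in that row, a contradiction; so the first discrepancy lies in $\Set(\Des(T))$ and $\Des(T)\succ\alpha$ in your order. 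This yields an upper unitriangular transition matrix to the fundamental basis in each degree, hence the result.

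One small point worth making explicit in a written version: you should check (it is easy) that $T_\alpha$ is indeed a standard extended tableau, i.e., that column entries increase upward even when a column has gaps; this follows since the entry in cell $(r,c)$ is $\alpha_1+\cdots+\alpha_{r-1}+c$, which is strictly increasing in $r$.
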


Every Schur function is in fact an extended Schur function. We may take the following result as definitional for the celebrated Schur functions:

\begin{proposition}\cite{Assaf.Searles:3}
If $\alpha$ is a partition, then the extended Schur function $\eS_\alpha$ is equal to the Schur function $s_\alpha$. 
\end{proposition}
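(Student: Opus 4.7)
The plan is to combine two ingredients: the formula for $\eS_\alpha$ given in Theorem~\ref{thm:eStofund}, and Gessel's classical expansion of the Schur function $s_\alpha$ in the fundamental basis of $\QSym$.

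First I would recall the observation, already noted in the excerpt immediately after the definition of $\SET(\alpha)$, that when $\alpha$ is a partition the column-strict and row-strict conditions defining a standard extended tableau coincide with the column-strict and row-strict conditions defining a standard Young tableau. Hence $\SET(\alpha) = \SYT(\alpha)$ as sets, and moreover the notion of a descent of $T\in \SET(\alpha)$ (an entry $i$ which is weakly right of $i+1$) specialises, under the straight-shape condition, to the usual notion of a descent of a standard Young tableau (an entry $i$ such that $i+1$ lies strictly below $i$); in either formulation $\Des(T)$ is the same composition of $n$.

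Next I would invoke Gessel's theorem \cite{Gessel}, which states that for a partition $\alpha\vdash n$,
\[
s_\alpha \;=\; \sum_{T\in \SYT(\alpha)} F_{\Des(T)}.
\]
Combining this with Theorem~\ref{thm:eStofund} and the identification $\SET(\alpha)=\SYT(\alpha)$ of the preceding paragraph, both $s_\alpha$ and $\eS_\alpha$ are equal to the same sum of fundamentals, so $\eS_\alpha = s_\alpha$.

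There is essentially no obstacle here: the only thing to be mildly careful about is that the paper uses French notation with entries increasing up columns and the convention that $i$ is a descent if it is weakly right of $i+1$, whereas Gessel's original formulation uses English notation and places $i+1$ strictly below $i$. These conventions match once one reflects the diagram vertically, so the descent sets are literally the same subset of $[n-1]$ and no re-indexing is required. Thus the proof is a one-line identification of tableaux together with a citation of Gessel's formula.
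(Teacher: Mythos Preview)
Your argument is correct. The paper does not actually supply its own proof of this proposition: it is cited from \cite{Assaf.Searles:3}, and the surrounding text even says the statement may be taken as definitional for the Schur functions. So there is nothing in the paper to compare against, but your route---identify $\SET(\alpha)$ with $\SYT(\alpha)$ for partition $\alpha$, check the descent conventions match, and invoke Gessel's fundamental expansion of $s_\alpha$---is exactly the natural one-line proof once Theorem~\ref{thm:eStofund} is in hand, and your handling of the French/English reflection is accurate.
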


The extended Schur functions are thus a basis of $\QSym$ that contains the Schur basis of symmetric functions. We note that other important and well-studied bases of $\QSym$ such as the quasisymmetric Schur functions, fundamental quasisymmetric functions and dual immaculate quasisymmetric functions do not contain the Schur functions.

The extended Schur functions are dual to the \emph{shin} basis $\eS^*_\alpha$ of noncommutative symmetric functions introduced and studied in \cite{CFLSX}. The shin functions have the property that the image of $\eS^*_\alpha$ under the natural projection from $\NSym$ to $\Sym$ is the Schur function $s_\alpha$ if $\alpha$ is a partition, and $0$ otherwise. Complete homogeneous functions expand positively in the shin basis \cite{CFLSX}, which then implies via duality that extended Schur functions expand positively into the monomial basis of quasisymmetric functions. 

Since extended Schur functions expand positively into the fundamental basis of quasisymmetric functions (Theorem~\ref{thm:eStofund}), duality implies the following result for shin functions.

\begin{proposition}
The ribbon Schur functions expand positively in the shin basis of $\NSym$ via the formula
\[{\bf r}_\beta = \sum_\beta K_{\alpha, \beta} \eS^*_\alpha\]
where $K_{\alpha,\beta}$ is the number of $T\in \SET(\alpha)$ such that $\Des(T)=\beta$.
\end{proposition}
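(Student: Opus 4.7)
The plan is to derive the formula by applying duality to the expansion of extended Schur functions in the fundamental basis established in Theorem~\ref{thm:eStofund}. That theorem can be rewritten as the change-of-basis formula
\[\eS_\alpha = \sum_\beta K_{\alpha,\beta}\, F_\beta,\]
where $K_{\alpha,\beta} = \#\{T \in \SET(\alpha) : \Des(T) = \beta\}$, exactly as defined in the proposition. Since the pairing $\langle \cdot, \cdot\rangle: \NSym \times \QSym \to \C$ satisfies $\langle \eS^*_\alpha, \eS_\beta\rangle = \delta_{\alpha,\beta}$ and $\langle {\bf r}_\alpha, F_\beta\rangle = \delta_{\alpha,\beta}$, the transition matrices on the two sides are transposes of one another, so it should immediately follow that ${\bf r}_\beta = \sum_\alpha K_{\alpha,\beta}\, \eS^*_\alpha$.

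Concretely, I would expand ${\bf r}_\beta = \sum_\alpha c_{\alpha,\beta}\, \eS^*_\alpha$ for unknown coefficients $c_{\alpha,\beta}$, then extract $c_{\gamma,\beta}$ by pairing both sides with $\eS_\gamma$. Orthogonality of $\{\eS^*_\alpha\}$ and $\{\eS_\alpha\}$ gives $c_{\gamma,\beta} = \langle {\bf r}_\beta, \eS_\gamma\rangle$. Substituting the expansion from Theorem~\ref{thm:eStofund} yields
\[c_{\gamma,\beta} = \Bigl\langle {\bf r}_\beta,\ \sum_\delta K_{\gamma,\delta}\, F_\delta \Bigr\rangle = \sum_\delta K_{\gamma,\delta}\,\langle {\bf r}_\beta, F_\delta\rangle = K_{\gamma,\beta},\]
using $\langle {\bf r}_\beta, F_\delta\rangle = \delta_{\beta,\delta}$. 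Renaming the dummy index $\gamma$ to $\alpha$ gives the claimed formula.

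There is essentially no obstacle: once Theorem~\ref{thm:eStofund} is in hand, the proposition is a one-line consequence of the general principle that dual changes of basis have transposed coefficient matrices. The only thing to be careful about is that the index $K_{\alpha,\beta}$ which indexes tableaux of shape $\alpha$ with descent composition $\beta$ must appear with $\beta$ playing the role of the fundamental-basis index throughout, which matches the statement as written.
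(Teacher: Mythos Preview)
Your proposal is correct and follows essentially the same approach as the paper's own proof: rewrite Theorem~\ref{thm:eStofund} as $\eS_\alpha = \sum_\beta K_{\alpha,\beta} F_\beta$, then use the dual pairings $\langle {\bf r}_\beta, F_\delta\rangle = \delta_{\beta,\delta}$ and $\langle \eS^*_\alpha, \eS_\gamma\rangle = \delta_{\alpha,\gamma}$ to transpose the change-of-basis matrix. The paper phrases the intermediate step as computing $\langle \eS_\alpha, {\bf r}_\beta\rangle = K_{\alpha,\beta}$ directly, but this is the same computation you carry out.
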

\begin{proof}
By Theorem~\ref{thm:eStofund} and the definition of $K_{\alpha,\beta}$, we have
\[\eS_\alpha = \sum_\beta K_{\alpha,\beta}F_\beta.\]
Hence, by the fact the ribbon Schur functions are dual to the fundamental quasisymmetric functions, we have 
\[\langle \eS_\alpha , {\bf r}_\beta \rangle = K_{\alpha,\beta}.\]
Therefore, since the shin functions are dual to the extended Schur functions, we have
\[{\bf r}_\beta = \sum_\beta K_{\alpha, \beta} \eS^*_\alpha.\]
\end{proof}

\subsection{$0$-Hecke algebras and quasisymmetric characteristic}

The $0$-Hecke algebra $H_n(0)$ is defined to be the algebra over $\C$ with generators $T_1, \ldots , T_{n-1}$ subject to relations
\begin{align*}
T_i^2 & = T_i \quad \mbox{ for all } 1\le i \le n-1 \\
T_iT_j & = T_jT_i \quad \mbox{ for all } i, j \mbox{ with } |i-j|\ge 2 \\
T_iT_{i+1}T_i & =T_{i+1}T_iT_{i+1} \quad \mbox{ for all } 1\le i \le n-2.
\end{align*}

For any permutation $\sigma \in S_n$, one can define an element $T_\sigma \in H_n(0)$ by $T_\sigma = T_{i_1}T_{i_2} \cdots T_{i_r}$ where $s_{i_1} s_{i_2} \cdots s_{i_r}$ is any reduced word for $\sigma$. Then $\{T_\sigma : \sigma \in S_n\}$ is an additive basis for $H_n(0)$.

The \emph{Grothendieck group} $\mathcal{G}_0(H_n(0))$ is the linear span of the isomorphism classes of the finite-dimensional representations of $H_n(0)$, subject to the relation $[Y]=[X]+[Z]$ whenever one has a short exact sequence $0\rightarrow X\rightarrow Y\rightarrow Z\rightarrow 0$ of $H_n(0)$-representations $X,Y,Z$.

There are $2^{n-1}$ irreducible representations of $H_n(0)$; these may be indexed by the $2^{n-1}$ compositions of $n$. Let $\mathcal{F}_\alpha$ denote the irreducible representation corresponding to the composition $\alpha$. By \cite{Norton}, $\mathcal{F}_\alpha$ is one-dimensional, hence equal to the span of some nonzero vector $v_\alpha$. The structure of $\mathcal{F}_\alpha$ as a $H_n(0)$-representation is given by the following action of the generators $T_i$ of $H_n(0)$:

\begin{align}\label{eq:irreps}
T_i(v_\alpha) = \begin{cases} v_\alpha & \mbox{ if } i\notin \Set(\alpha) \\
					       0 & \mbox{ if } i \in \Set(\alpha).
					       \end{cases}
 \end{align}
Define
\[\mathcal{G} = \bigoplus_{n\ge 0} \mathcal{G}_0(H_n(0)).\]
The set $\{\mathcal{F}_\alpha\}$ as $\alpha$ ranges over all compositions is an additive basis of $\mathcal{G}$. Moreover, $\mathcal{G}$ has a ring structure via the induction product. There is a ring isomorphism $ch:\mathcal{G}\rightarrow \QSym$ \cite{DKLT} defined by setting $ch([\mathcal{F}_\alpha]) = F_\alpha$. For any $H_n(0)$-module $X$, the image $ch([X])$ is called the \emph{quasisymmetric characteristic} of $X$.

\section{Modules for extended Schur functions}

The immaculate basis, noncommutative Schur basis and the shin basis have been described as the \emph{canonical Schur-like} bases of $\NSym$ \cite{Campbell}. Interpretations of the dual bases of the first two as quasisymmetric characteristics of certain $H_n(0)$-modules are given in \cite{BBSSZ} and \cite{TvW:1} respectively. We complete this picture by constructing $H_n(0)$-modules whose quasisymmetric characteristics are the extended Schur functions.

Specifically, in this section we construct a $H_n(0)$-module $X_\alpha$ for each composition $\alpha$ of $n$, and prove that the quasisymmetric characteristic $ch([X_\alpha])$ is equal to the extended Schur function $\eS_\alpha$. Additionally, we prove that these modules $X_\alpha$ are indecomposable for all compositions $\alpha$.

\subsection{$0$-Hecke actions and modules}

Given a composition $\alpha$ of $n$, define a \emph{standard row-increasing tableau} of shape $\alpha$ to be a bijective assignment of the integers $1, \ldots , n$ to the boxes of $D(\alpha)$ such that entries increase from left to right along rows. We note that no condition is imposed on columns. Let $\StdTab(\alpha)$ denote the set of standard row-increasing tableaux of shape $\alpha$.
For $T\in \StdTab(\alpha)$ and $1\le i \le n-1$, define 
\[\pi_i(T) = \begin{cases} T & \mbox{ if } i \mbox{ is weakly above } i+1 \mbox{ in } T \\
				s_i(T) & \mbox{ otherwise }\end{cases}
\]
where $s_i(T)$ denotes the filling of $D(\alpha)$ obtained from $T$ by swapping the entries $i$ and $i+1$.

\begin{example}
Let $\alpha = (4,2,3)$ and let 
\[T = \tableau{ 4 & 6 & 7 \\ 1 & 5  \\ 2 & 3 & 8 & 9 }\in \StdTab(\alpha).\]
Then $\pi_4(T) = \pi_8(T)=T$, while 
\[\pi_5(T) = s_5(T) =  \tableau{ 4 & 5 & 7 \\ 1 & 6  \\ 2 & 3 & 8 & 9 }\in \StdTab(\alpha).\]
\end{example}

 Let $V_\alpha$ denote the $\mathbb{C}$-vector space spanned by $\StdTab(\alpha)$.

\begin{proposition}\label{prop:Heckeaction}				
The operators $\pi_i$ define a $H_n(0)$-action on $V_\alpha$. Specifically, we have $\pi_i(T)\in V_\alpha$ for all $T\in V_\alpha$ and all $1\le i \le n-1$, and the $\pi_i$ satisfy the relations for the generators $T_i$ of the $0$-Hecke algebra.
\end{proposition}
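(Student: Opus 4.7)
The plan is to verify the three defining $0$-Hecke relations for the operators $\pi_i$, after first confirming closure, i.e., that $\pi_i$ maps $\StdTab(\alpha)$ to itself. The identity case $\pi_i(T)=T$ is immediate. In the swap case, $i$ lies in a row strictly below $i+1$, so the two entries occupy distinct rows. In the row formerly containing $i$, we replace $i$ by $i+1$; the right-hand neighbour of $i$ there (if it exists) is strictly greater than $i$ and different from $i+1$ (since $i+1$ lives in another row), hence at least $i+2$, so row-increase is preserved. The symmetric argument handles the row formerly containing $i+1$.

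For $\pi_i^2=\pi_i$: if $\pi_i(T)=T$ there is nothing to check, while if $\pi_i(T)=s_i(T)$ then after the swap the entry $i$ sits in the row where $i+1$ previously lay, which was strictly above, so $i$ is strictly above $i+1$ in $s_i(T)$ and $\pi_i$ fixes it. For the far commutation $\pi_i\pi_j=\pi_j\pi_i$ with $|i-j|\ge 2$: the pairs $\{i,i+1\}$ and $\{j,j+1\}$ are disjoint, so neither operator alters the position of any entry in the other's trigger pair nor changes whether the other will swap; hence the two maps commute on every $T$.

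The principal obstacle is the braid relation $\pi_i\pi_{i+1}\pi_i=\pi_{i+1}\pi_i\pi_{i+1}$. Here I would reduce to tracking only the triple $(r_i,r_{i+1},r_{i+2})$ of rows housing entries $i$, $i+1$, $i+2$, since each factor depends only on, and only permutes, the rows of two of these three entries while leaving the rest of $T$ untouched. I would then enumerate the weak orderings of this triple and, for each case, compute both three-fold composites step by step, checking that they produce the same final permutation of the three original row positions. This parallels the standard verification that analogous operators define the classical $0$-Hecke action on standard Young tableaux; the novelty here is that our tableaux are only row-increasing rather than also column-increasing, which is precisely why the preliminary closure step had to be carried out separately.
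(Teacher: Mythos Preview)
Your approach is correct and essentially identical to the paper's: closure (the paper phrases this more briefly as ``exchanging $i$ and $i+1$ does not affect the relative order of the entries in either of the two rows''), idempotence, far commutation, and then the braid relation handled by case analysis on the relative row positions of $i,i+1,i+2$. The paper carries out that six-case enumeration explicitly, whereas you only outline it; to finish, you would need to actually execute those cases as the paper does.
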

\begin{proof}
Let $T\in \StdTab(\alpha)$. First we note that $\pi_i(T)\in V_\alpha$, since $\pi_i$ can exchange the entries $i$ and $i+1$ only if they are in different rows, in which case exchanging $i$ and $i+1$ does not affect the relative order of the entries in either of the two rows containing $i$ or $i+1$.

If $i$ is weakly above $i+1$ in $T$, then $\pi_i(T)=T$ so $\pi_i^2(T)=T = \pi_i(T)$. Otherwise, $\pi_i(T)=s_i(T)$, and then $\pi_i^2(T)=\pi_i(s_i(T)) = s_i(T)=\pi_i(T)$. Hence $\pi_i^2=\pi_i$.

If $|i-j|\ge 2$, then $\{i,i+1\}\cap\{j,j+1\} = \emptyset$, so $\pi_i$ and $\pi_j$ affect disjoint pairs of boxes and thus it is clear that $\pi_i\pi_j(T) = \pi_j\pi_i(T)$.

Finally, we show $\pi_i\pi_{i+1}\pi_i = \pi_{i+1}\pi_i\pi_{i+1}$. We check the following cases:
\begin{enumerate}
\item $i$ is weakly above $i+1$; $i+1$ is weakly above $i+2$
\item $i$ is strictly below $i+1$; $i+1$ is strictly below $i+2$
\item $i$ is weakly above $i+1$; $i+1$ is strictly below $i+2$
 \begin{enumerate}
 \item $i$ is weakly above $i+2$
 \item $i$ is strictly below $i+2$
 \end{enumerate}
\item $i$ is strictly below $i+1$; $i+1$ is weakly above $i+2$
 \begin{enumerate}
 \item $i$ is weakly above $i+2$
 \item $i$ is strictly below $i+2$
 \end{enumerate}
\end{enumerate}
\noindent
(1): Here we have $\pi_i(T)=\pi_{i+1}(T)=T$, hence $\pi_i\pi_{i+1}\pi_i(T) = \pi_{i+1}\pi_i\pi_{i+1}(T)$.

\noindent
(2): Here it is straightforward to check $\pi_i\pi_{i+1}\pi_i(T) = s_is_{i+1}s_i(T) = s_{i+1}s_is_{i+1}(T) = \pi_{i+1}\pi_i\pi_{i+1}(T)$.

\noindent
(3): In this case, we have $\pi_i(T)=T$ and $\pi_{i+1}(T) = s_{i+1}(T)$. Hence $\pi_i\pi_{i+1}\pi_i(T) = \pi_is_{i+1}(T)$ and $\pi_{i+1}\pi_i\pi_{i+1}(T)= \pi_{i+1}\pi_is_{i+1}(T)$. Then we have:

(3a):  Here, $\pi_i(s_{i+1}(T)) = s_{i+1}(T)$. So $\pi_i\pi_{i+1}\pi_i(T) = s_{i+1}(T)$ and $\pi_{i+1}\pi_i\pi_{i+1}(T)= \pi_{i+1}s_{i+1}(T) = s_{i+1}(T)$ (since $s_{i+1}(T)$ has $i+1$ weakly above $i+2$).

(3b): Here, $\pi_i(s_{i+1}(T)) = s_is_{i+1}(T)$. So $\pi_i\pi_{i+1}\pi_i(T) = s_is_{i+1}(T)$ and $\pi_{i+1}\pi_i\pi_{i+1}(T)= \pi_{i+1}s_is_{i+1}(T)$. But $\pi_{i+1}s_is_{i+1}(T)= s_is_{i+1}(T)$; this is because $s_is_{i+1}$ sends $i+1$ to the original position of $i$ in $T$ and $i+2$ to the original position of $i+1$ in $T$, meaning that $i+1$ is weakly above $i+2$ in $s_is_{i+1}(T)$.

\noindent  
(4): In this case, we have $\pi_{i+1}(T) = T$ and $\pi_i(T) = s_i(T)$. Hence $\pi_i\pi_{i+1}\pi_i(T) = \pi_i\pi_{i+1}s_i(T)$ and $\pi_{i+1}\pi_i\pi_{i+1}(T)= \pi_{i+1}s_i(T)$. Then we have:

(4a): Here, $\pi_{i+1}s_i(T) = s_i(T)$. So $\pi_{i+1}\pi_i\pi_{i+1}(T)= s_i(T)$ and $\pi_i\pi_{i+1}\pi_i(T) = \pi_i s_i(T) = s_i(T)$ (since $s_i(T)$ has $i$ weakly above $i+1$).

4(b): Here $\pi_{i+1}s_i(T) = s_{i+1}s_i(T)$. So $\pi_{i+1}\pi_i\pi_{i+1}(T)= s_{i+1}s_i(T)$ and $\pi_i\pi_{i+1}\pi_i(T) = \pi_i s_{i+1}s_i(T)$. But $\pi_i s_{i+1}s_i(T)=s_{i+1}s_i(T)$; this is because $s_{i+1}s_i$ sends $i$ to the original position of $i+1$ in $T$ and $i+1$ to the original position of $i+2$ in $T$, meaning that $i$ is weakly above $i+1$ in $s_{i+1}s_i(T)$.
\end{proof}

\begin{remark}
This action is equivalent to the $H_n(0)$-action defined on words of content $\alpha$ in \cite{BBSSZ}. We prefer to work directly with tableaux of shape $D(\alpha)$, and include the proof of Proposition~\ref{prop:Heckeaction} above for completeness.
\end{remark}

Let $\NSET(\alpha)$ denote $\StdTab(\alpha)\setminus \SET(\alpha)$, i.e., those elements of $\StdTab(\alpha)$ in which entries do not increase up some column. Let $Y_\alpha$ denote the vector subspace of $V_\alpha$ spanned by $\NSET(\alpha)$.

\begin{lemma}
The vector space $Y_\alpha$ is an $H_n(0)$-submodule of $V_\alpha$.
\end{lemma}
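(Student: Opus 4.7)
The plan is to show that each generator $\pi_i$ maps $\NSET(\alpha)$ into itself, so that $Y_\alpha$ is closed under the $H_n(0)$-action. If $i$ is weakly above $i+1$ in $T$, then $\pi_i(T) = T \in \NSET(\alpha)$ by assumption, and there is nothing to do. The substantive case is $\pi_i(T) = s_i(T)$, i.e., $i$ strictly below $i+1$ in $T$; I need to argue that the failure of column-increase in $T$ transfers to a failure in $s_i(T)$. I would fix a witness of $T \in \NSET(\alpha)$: entries $x$ and $y$ in a common column, with $x$ strictly above $y$ but $x < y$, and then show that the corresponding pair of positions in $s_i(T)$ (possibly with entries altered by the swap) still witnesses a column violation.

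The verification splits according to how many of $\{x,y\}$ lie in $\{i, i+1\}$. If neither does, the pair is untouched by $s_i$ and the violation persists verbatim. If exactly one does, say $x = i$, then $y \neq i+1$ and $y > i$ force $y \geq i+2$, so after the swap the position that held $i$ holds $i+1 < y$, still a violation; the three symmetric subcases ($x = i+1$, $y = i$, $y = i+1$) follow by the same one-line comparison. Finally, if both $x$ and $y$ lie in $\{i, i+1\}$, then the bad pair is $\{i, i+1\}$ in a single column, and the only configuration in which this gives a violation is $x = i$ above $y = i+1$—that is, $i$ above $i+1$—which directly contradicts the hypothesis that $\pi_i$ acts nontrivially on $T$. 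Thus in every case a column violation of $T$ transports to a column violation of $s_i(T)$, giving $s_i(T) \in \NSET(\alpha)$ as required.

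There is no real obstacle beyond keeping the subcases organized; the proof is wholly local, examining one bad column at a time. The conceptual heart is the observation that the only way the swap $s_i$ could possibly heal a column violation is if the offending column of $T$ consists precisely of $i$ above $i+1$, and this is exactly the configuration that makes $\pi_i$ act as the identity. Hence whenever $\pi_i$ moves anything, it must leave the set of column-non-increasing tableaux invariant, and $Y_\alpha$ is an $H_n(0)$-submodule of $V_\alpha$.
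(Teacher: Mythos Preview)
Your proof is correct and follows essentially the same approach as the paper's own proof: both fix a witnessing column violation and verify it survives the action of $\pi_i$ by a short local case analysis. The only cosmetic difference is that the paper organizes its cases by whether the violating pair consists of consecutive integers, whereas you organize by the size of the overlap $\{x,y\}\cap\{i,i+1\}$.
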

\begin{proof}
Suppose $T\in \NSET(\alpha)$. Then $T$ has a pair of entries $j<k$ such that $j$ is above $k$ in the same column. If $k>j+1$, then for any $1\le i \le n-1$, $\pi_i$ can change only one of $j,k$, and by at most $1$, so $\pi_i(T)\in Y_\alpha$. If $k=j+1$, then $j$ is above $j+1=k$, so $\pi_j(T) = T \in Y_\alpha$. It remains to observe that $\pi_i$ for $i\neq j$ either has no effect on the boxes with entries $j$ or $j+1$, or it replaces $j$ with $j-1$ or $j+1$ with $j+2$, which does not change the relative order of the entries of these two boxes. Hence $\pi_i(T)\in \NSET(\alpha)$ for all $1\le i \le n-1$.
\end{proof}

Define $X_\alpha$ to be the quotient module $V_\alpha/Y_\alpha$. Then $\SET(\alpha)$ is a basis of $X_\alpha$.

\begin{theorem}\label{thm:actiononsetmodule}
For any $1\le i \le n-1$ and any composition $\alpha$ of $n$, the action of $\pi_i$ on $X_\alpha$ is given by 
\[\pi_i(T) = \begin{cases} T & \mbox{ if } i \mbox{ is strictly left of } i+1 \mbox{ in } T \\
				      0 & \mbox{ if } i \mbox{ and } i+1 \mbox{ are in the same column of } T \\
				      s_i(T) & \mbox{ if } i \mbox{ is strictly right of } i+1 \mbox{ in } T
				      \end{cases}
				      \]
				      for any $T\in \SET(\alpha)$.
\end{theorem}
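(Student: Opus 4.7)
The plan is to reduce to a case analysis on the relative position of $i$ and $i+1$ in $T \in \SET(\alpha)$, using the definition of the $\pi_i$-action on $V_\alpha$ from the construction ($\pi_i(T) = T$ if $i$ is weakly above $i+1$, and $\pi_i(T) = s_i(T)$ otherwise), together with the identification $X_\alpha = V_\alpha/Y_\alpha$ in which every element of $\NSET(\alpha)$ becomes $0$. Let $i$ occupy cell $(r_1, c_1)$ and $i+1$ occupy cell $(r_2, c_2)$ of $T$.

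The key geometric observation, which I would establish first, is that the defining conditions of $\SET(\alpha)$ forbid two diagonal configurations of $i$ and $i+1$: one cannot have $r_1 < r_2$ together with $c_1 < c_2$, nor can one have $r_1 > r_2$ together with $c_1 > c_2$. In the first configuration, the cell $(r_2, c_1)$ lies in $D(\alpha)$ (since row $r_2$ extends at least to column $c_2 > c_1$), and its entry would have to exceed $i$ by column-increase and be strictly less than $i+1$ by row-increase, which is impossible. The second configuration is ruled out symmetrically using $(r_1, c_2)$.

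From this, each case of the theorem follows directly. If $i$ is strictly left of $i+1$, so $c_1 < c_2$, the observation forces $r_1 \geq r_2$, meaning $i$ is weakly above $i+1$, so $\pi_i(T) = T$ already in $V_\alpha$ and hence in $X_\alpha$. If $i$ and $i+1$ share a column, then column-increase places $i$ strictly below $i+1$, so $\pi_i(T) = s_i(T)$ in $V_\alpha$; after the swap, $i+1$ sits below $i$ in that column, so $s_i(T) \in \NSET(\alpha) \subseteq Y_\alpha$ and $\pi_i(T) \equiv 0$ in $X_\alpha$. (A brief check confirms $s_i(T) \in \StdTab(\alpha)$: the horizontal neighbors of the two swapped cells have entries that were either strictly less than $i$ or strictly greater than $i+1$, so row-increase is preserved.) If $i$ is strictly right of $i+1$, so $c_1 > c_2$, the observation together with row-increase (which rules out $r_1 = r_2$) forces $r_1 < r_2$, so $\pi_i(T) = s_i(T)$ in $V_\alpha$.

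It remains, in this last case, to verify $s_i(T) \in \SET(\alpha)$ so that the equality descends to $X_\alpha$. Since $i$ and $i+1$ lie in distinct rows and distinct columns, no row or column simultaneously contains both. Every cell adjacent in the same row or column to $(r_1, c_1)$ has entry either strictly less than $i$ or strictly greater than $i+1$ (as $i+1$ is elsewhere), and the same holds for neighbors of $(r_2, c_2)$. Therefore replacing $i$ by $i+1$ at the former and $i+1$ by $i$ at the latter preserves both the row-strict and column-strict conditions, so $s_i(T) \in \SET(\alpha)$. The only nontrivial ingredient is the forbidden-diagonal observation in the second paragraph; once it is in hand, the remaining checks are mechanical bookkeeping on the cells surrounding $(r_1,c_1)$ and $(r_2,c_2)$.
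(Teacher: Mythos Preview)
Your proof is correct and follows essentially the same approach as the paper. You formalize the argument with coordinates and isolate the two forbidden diagonal configurations of $i$ and $i+1$ as a preliminary observation, whereas the paper derives the same contradictions inline in the first and third cases; you also spell out in more detail why $s_i(T)\in\SET(\alpha)$ in the third case, which the paper asserts from ``$i$ and $i+1$ are in different rows and different columns'' without further comment.
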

\begin{proof}
Let $T\in \SET(\alpha)$. First suppose $i$ is strictly left of $i+1$ in $T$. Since entries increase in both rows and columns of $T$, $i$ cannot be strictly below $i+1$ in $T$; if it were, the entry in the row of $i+1$ and column of $i$ would have to be strictly larger than $i$ and strictly smaller than $i+1$, which is impossible. Hence $\pi_i(T)=T\in \SET(\alpha)$. 

Now suppose $i$ and $i+1$ are in the same column of $T$. Since $T\in \SET(\alpha)$, $i$ is strictly below $i+1$. Then $\pi_i(T) = s_i(T)$, in which $i$ is strictly above $i+1$. Hence $\pi_i(T)\in \NSET_\alpha$, i.e. $\pi_i(T) = 0$ in $X_\alpha = V_\alpha/Y_\alpha$. 

Finally suppose $i$ is strictly right of $i+1$ in $T$. Since entries increase along rows and up columns of $T$, $i$ cannot also be weakly above $i+1$ in $T$. Hence $\pi_i(T)=s_i(T)$. Since $i$ and $i+1$ are in different rows and different columns, $\pi_i(T)=s_i(T)\in \SET(\alpha)$.
\end{proof}

\begin{remark}
It is also possible, though tedious, to show directly that the operators $\pi_i$ on $\SET(\alpha)$ defined in Theorem~\ref{thm:actiononsetmodule} satisfy the $0$-Hecke relations. 
\end{remark}

\begin{example}
Let $\alpha = (4,2,3)$ and let 
\[T = \tableau{ 4 & 8 & 9 \\ 3 & 7  \\ 1 & 2 & 5 & 6 }\in \SET(\alpha).\]
Then $\pi_5(T)=T$, $\pi_7(T)=0$, and
\[\pi_6(T) = s_6(T) =  \tableau{ 4 & 8 & 9 \\ 3 & 6  \\ 1 & 2 & 5 & 7 }\in \SET(\alpha).\]
\end{example}

Define a relation $\preceq$ on $\SET(\alpha)$ by setting $S\preceq T$ if we can obtain $S$ from $T$ by applying a (possibly empty) sequence of the $\pi_i$ operators.

\begin{lemma}
The relation $\preceq$ is a partial order on $\SET(\alpha)$.
\end{lemma}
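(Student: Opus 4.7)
The plan is to check the three partial-order axioms. Reflexivity is immediate via the empty sequence of operators, and transitivity by concatenating two such sequences, so the substantive content is antisymmetry.

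For antisymmetry, I would introduce a statistic on $\SET(\alpha)$ that is weakly monotone along any chain of the $\pi_i$'s and strictly changes whenever a $\pi_i$ acts nontrivially. Writing $c_k(T)$ for the column of the box of $T$ containing the entry $k$, a natural choice is
\[
\mathrm{col}(T) \;=\; \sum_{k=1}^n k \cdot c_k(T).
\]
By Theorem~\ref{thm:actiononsetmodule}, for each $i$ the operator $\pi_i$ either fixes $T$, sends it to $0$, or swaps $i$ with $i+1$, the last case occurring exactly when $c_i(T) > c_{i+1}(T)$. The zero case cannot appear in any chain whose endpoints both lie in $\SET(\alpha)$, and a short computation shows that under a swap, $\mathrm{col}$ increases by exactly $c_i(T) - c_{i+1}(T) > 0$, while it is unchanged when $\pi_i$ fixes $T$.

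Consequently $S \preceq T$ forces $\mathrm{col}(S) \geq \mathrm{col}(T)$, with equality only if every $\pi_i$ in the chain acted as the identity, in which case $S = T$. Combining $S \preceq T$ with $T \preceq S$ yields $\mathrm{col}(S) = \mathrm{col}(T)$, and the equality clause then gives $S = T$. The only subtlety is choosing a weighting on the columns so that the swap's equal-and-opposite shifts in $c_i$ and $c_{i+1}$ do not cancel; any strictly increasing positive weight sequence works, and the choice of weight $k$ on $c_k$ is the most convenient.
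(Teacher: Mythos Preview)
Your proof is correct and takes a slightly different route from the paper. Both arguments dispense with reflexivity and transitivity immediately and prove antisymmetry by exhibiting a statistic that weakly increases along any chain of $\pi_i$'s and strictly increases whenever some $\pi_i$ acts nontrivially. The paper uses a \emph{row}-based vector invariant: the $j$th coordinate of $d_T$ is the sum of the entries in the first $j$ rows of $T$; when $\pi_i$ acts as $s_i$, the entry $i$ is strictly below $i+1$, so the swap weakly increases every coordinate of $d_T$ and strictly increases the coordinate indexed by the row of $i$. You instead use a \emph{column}-based scalar invariant $\sum_k k\,c_k(T)$, exploiting directly that the swap case in Theorem~\ref{thm:actiononsetmodule} is characterised by $c_i(T)>c_{i+1}(T)$. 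Your scalar statistic is a bit more economical (a single number rather than a vector), while the paper's invariant records the row-height monotonicity that is implicit in the action; either choice suffices, and your remark that any strictly increasing weight sequence on the entries would work is correct.
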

\begin{proof}
It is clear from the definition that $\preceq$ is reflexive and transitive. To see that it is antisymmetric, let $T\in \SET(\alpha)$ and define a vector $d_T$ by letting its $j$th entry $(d_T)_j$ be the sum of the entries in the first $j$ rows of $T$, for $1\le j \le \ell(\alpha)$. Suppose $\pi_i(T)=s_i(T)$. Then $i$ is strictly lower than $i+1$ in $T$ and $\pi_i$ exchanges $i$ and $i+1$. Consequently, $(d_{\pi_i(T)})_j\ge (d_T)_j $ for all $1\le j \le \ell(\alpha)$, and if $k$ is the index of the row in which $i$ appears in $T$, we have $(d_{\pi_i(T)})_k>(d_T)_k$. Therefore, if $S$ is obtained from $T$ via a sequence of the operators $\pi_i$, then either $S=T$ or there is some entry of $d_S$ that is strictly larger than the corresponding entry of $d_T$. Since application of the $\pi_i$ operators to $S$ cannot decrease entries of $d_S$, it is not possible to also obtain $T$ from $S$ via a sequence of these operators.
\end{proof}

Extend the partial order $\preceq$ on $\SET(\alpha)$ to a total order $\preceq^t$ arbitrarily. Suppose  $\preceq^t$ orders the $m$ elements of $\SET(\alpha)$ as $T_1 \preceq^t T_2 \preceq^t \cdots \preceq^t T_m$. For each $1\le j \le m$, define $X_j$ to be the $\mathbb{C}$-linear span of all $T_k\in \SET(\alpha)$ such that $k\le j$. It is immediate from the definitions of $\preceq$, $\preceq^t$ and $X_j$ that $X_j$ is an $H_n(0)$-module for each $1\le j\le m$.

We therefore have a filtration of $X_\alpha$ given by
\[0:=X_0\subset X_1 \subset X_2 \subset \cdots \subset X_m = X_\alpha.\]

By definition, each quotient module $X_j/X_{j-1}$ is one-dimensional and spanned by $T_j\in \SET(\alpha)$.

\begin{lemma}\label{lem:actiononquotientmodules}
For any $1\le i \le n-1$ and any $1\le j \le m$, we have
\[\pi_i(T_j) = \begin{cases}
                    T_j & \mbox{ if } i \mbox{ is strictly left of } i+1 \mbox{ in } T_j \\
                    0 & \mbox{ otherwise. }
                    \end{cases}
                    \]
\end{lemma}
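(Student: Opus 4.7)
The plan is to reduce to the case analysis from Theorem~\ref{thm:actiononsetmodule} and then use the fact that the total order $\preceq^t$ refines the partial order $\preceq$. Fix $T_j\in \SET(\alpha)$ and work in the quotient $X_j/X_{j-1}$, where $X_{j-1}$ is spanned by $\{T_k : k<j\}$.

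First I would handle the two easy cases. If $i$ is strictly left of $i+1$ in $T_j$, then Theorem~\ref{thm:actiononsetmodule} gives $\pi_i(T_j)=T_j$ already in $X_\alpha$, so this equality descends to the quotient. If $i$ and $i+1$ occupy the same column of $T_j$, then Theorem~\ref{thm:actiononsetmodule} gives $\pi_i(T_j)=0$ in $X_\alpha$, which likewise descends. Both of these cases are immediate.

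The substantive case is when $i$ is strictly right of $i+1$ in $T_j$. Here Theorem~\ref{thm:actiononsetmodule} gives $\pi_i(T_j)=s_i(T_j)$, and this element of $\SET(\alpha)$ is generically nonzero in $X_\alpha$. The key point to verify is that $s_i(T_j)=T_k$ for some $k<j$, so that $s_i(T_j)\in X_{j-1}$ and hence vanishes in $X_j/X_{j-1}$. Since $s_i(T_j)$ is obtained from $T_j$ by a single application of $\pi_i$, we have $s_i(T_j)\preceq T_j$ by definition of $\preceq$. Moreover $s_i(T_j)\neq T_j$ (the entries $i$ and $i+1$ changed positions), so by the antisymmetry of $\preceq$ established in the previous lemma, we have $s_i(T_j)\prec T_j$ strictly. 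Because $\preceq^t$ is a linear extension of $\preceq$, this forces $s_i(T_j)\prec^t T_j$, i.e., $s_i(T_j)=T_k$ for some $k<j$.

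The only potential obstacle is making sure that the strict inequality $s_i(T_j)\prec T_j$ in the partial order actually implies the strict inequality in $\preceq^t$; but this is immediate from the definition of a linear extension, so the proof is clean. Concluding, $\pi_i(T_j)=s_i(T_j)\in X_{j-1}$, and therefore $\pi_i(T_j)=0$ in $X_j/X_{j-1}$, completing the ``otherwise'' case.
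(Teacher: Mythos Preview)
Your proof is correct and follows essentially the same approach as the paper: both reduce to the case analysis of Theorem~\ref{thm:actiononsetmodule} and then observe that in the ``strictly right'' case the result $s_i(T_j)$ lies in $X_{j-1}$. You spell out explicitly why $s_i(T_j)\in X_{j-1}$ via the partial order and its linear extension, whereas the paper simply asserts this containment; your added detail is a helpful clarification but not a different argument.
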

\begin{proof}
If $i$ is strictly left of $i+1$ in $T_j$, then by Theorem~\ref{thm:actiononsetmodule} we have $\pi_i(T_j)=T_j$. If $i$ is not strictly left of $i+1$, then by Theorem~\ref{thm:actiononsetmodule} $\pi_i(T)$ is either $0$ or $s_i(T)$. However, $s_i(T) \in X_{j-1}$, so $s_i(T)=0$ in $X_j/X_{j-1}$.
\end{proof}

We may now prove our main result.

\begin{theorem}
Let $\alpha$ be a composition of $n$. The quasisymmetric characteristic of the $H_n(0)$-module $X_\alpha$ is the extended Schur function $\eS_\alpha$.
\end{theorem}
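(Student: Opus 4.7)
The plan is to compute the class $[X_\alpha]$ in the Grothendieck group $\mathcal{G}_0(H_n(0))$ by exploiting the filtration $0 = X_0 \subset X_1 \subset \cdots \subset X_m = X_\alpha$ built just above. Since this is a filtration by $H_n(0)$-submodules whose successive quotients are one-dimensional, the short exact sequences $0 \to X_{j-1} \to X_j \to X_j/X_{j-1} \to 0$ give
\[
[X_\alpha] \;=\; \sum_{j=1}^m \bigl[X_j/X_{j-1}\bigr]
\]
in $\mathcal{G}_0(H_n(0))$.

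Next, I would identify each one-dimensional quotient $X_j/X_{j-1}$ with one of the irreducible $H_n(0)$-modules $\mathcal{F}_\beta$ described in (\ref{eq:irreps}). By Lemma~\ref{lem:actiononquotientmodules}, the generator $T_j$ of $X_j/X_{j-1}$ satisfies $\pi_i(T_j) = T_j$ when $i$ is strictly left of $i+1$ in $T_j$, and $\pi_i(T_j) = 0$ otherwise. Now recall from the definition just before Example~\ref{ex:Des} that $i \in \Set(\Des(T_j))$ precisely when $i$ is weakly right of $i+1$ in $T_j$, i.e.\ when $i$ is \emph{not} strictly left of $i+1$. Comparing with (\ref{eq:irreps}), this is exactly the action of the generators of $H_n(0)$ on $\mathcal{F}_{\Des(T_j)}$. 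Hence $X_j/X_{j-1} \cong \mathcal{F}_{\Des(T_j)}$ as $H_n(0)$-modules.

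Finally, I would apply the quasisymmetric characteristic ring isomorphism $ch$ of \cite{DKLT}, using $ch([\mathcal{F}_\beta]) = F_\beta$:
\[
ch([X_\alpha]) \;=\; \sum_{j=1}^m ch\bigl(\bigl[\mathcal{F}_{\Des(T_j)}\bigr]\bigr) \;=\; \sum_{T \in \SET(\alpha)} F_{\Des(T)} \;=\; \eS_\alpha,
\]
where the last equality is Theorem~\ref{thm:eStofund}.

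All of the substantive work has already been done in the lemmas above; what remains is bookkeeping and making sure the dictionary between the quotient action and the irreducible action is correctly matched. The only minor care point is verifying that the condition ``$i$ is strictly left of $i+1$'' in Lemma~\ref{lem:actiononquotientmodules} is precisely the complement of the descent condition on $T_j$, which is immediate from the definitions once one unpacks ``weakly right of.'' There is no essential obstacle.
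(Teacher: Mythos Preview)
Your proposal is correct and follows essentially the same approach as the paper: use the filtration to write $[X_\alpha]$ as the sum of the classes of the one-dimensional quotients, identify each quotient $X_j/X_{j-1}$ with $\mathcal{F}_{\Des(T_j)}$ via Lemma~\ref{lem:actiononquotientmodules} and (\ref{eq:irreps}), and then apply $ch$ and Theorem~\ref{thm:eStofund}. Your write-up is in fact slightly more explicit than the paper's in spelling out the short exact sequences and the matching of ``$i$ strictly left of $i+1$'' with $i\notin\Set(\Des(T_j))$.
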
				      
\begin{proof}
The quotient module $X_j/X_{j-1}$ is one-dimensional, thus irreducible. Lemma~\ref{lem:actiononquotientmodules} implies that 
\[\pi_i(T_j) = \begin{cases}
                    T_j & \mbox{ if } i \notin \Set(\Des(T_j)) \\
                    0 & \mbox{ if } i \in \Set(\Des(T_j)).
                    \end{cases}
                    \]
Therefore, by (\ref{eq:irreps}), $X_j/X_{j-1}$ is isomorphic as $H_n(0)$-modules to $\mathcal{F}_{\Des(T_j)}$. Hence we have $ch([X_j/X_{j-1}]) = F_{\Des(T_j)}$. Therefore,
\[ch([X_\alpha]) = \sum_{j=1}^m ch([X_j/X_{j-1}]) =  \sum_{j=1}^mF_{\Des(T_j)}= \sum_{T\in \SET(\alpha)}F_{\Des(T)} = \eS_\alpha,\]
where the last equality follows from Theorem~\ref{thm:eStofund}.
\end{proof}

\subsection{Indecomposability}

As is the case for the dual immaculate quasisymmetric functions, but not the case for the quasisymmetric Schur functions, the modules $X_\alpha$ for the extended Schur functions are indecomposable. We devote the remainder of the paper to establishing this fact, following the approach of \cite{BBSSZ} and \cite{TvW:1}.

Let $T^{{\rm sup}}_\alpha$ be the standard extended tableau of shape $\alpha$ whose entries in the $i$th row are the first $\alpha_i$ integers larger than $\alpha_1+\cdots + \alpha_{i-1}$. We call $T^{{\rm sup}}_\alpha$ the \emph{super-standard} extended tableau of shape $\alpha$. In Example~\ref{ex:SET}, $T_1$ is the super-standard extended tableau of shape $(2,1,3)$.

\begin{lemma}\label{lem:cyclicallygenerated}
The module $X_\alpha$ is cyclically generated by $T^{{\rm sup}}_\alpha$.
\end{lemma}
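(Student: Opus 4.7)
The plan is to prove that every $T \in \SET(\alpha)$ lies in the cyclic submodule $H_n(0) \cdot T^{{\rm sup}}_\alpha$ by induction on the scalar statistic $d(T) := \sum_{j=1}^{\ell(\alpha)} (d_T)_j$, where $(d_T)_j$ is the row-sum vector from the antisymmetry argument for $\preceq$; this statistic is minimized on $\SET(\alpha)$ by $T^{{\rm sup}}_\alpha$. The base case is immediate. For the inductive step, I seek for each $T \ne T^{{\rm sup}}_\alpha$ an index $i$ such that $i$ is strictly above and strictly left of $i+1$ in $T$. Given such $i$, the tableau $T' := s_i(T)$ lies in $\SET(\alpha)$ (rows and columns remain increasing, since $i$ and $i+1$ occupy different rows and different columns), and by Theorem~\ref{thm:actiononsetmodule} we have $\pi_i(T') = T$ because $i$ is strictly right of $i+1$ in $T'$. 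The swap moves $i$ into a lower row and $i+1$ into a higher row, giving $d(T') < d(T)$, so by induction $T' \in H_n(0) \cdot T^{{\rm sup}}_\alpha$ and hence so is $T = \pi_i(T')$.

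The main step is to produce such an $i$ when $T \ne T^{{\rm sup}}_\alpha$. Let $k$ be the smallest entry whose position in $T$ differs from its $T^{{\rm sup}}_\alpha$ position $(r^*, c^*)$, where $k = N_{r^*-1} + c^*$ and $N_j := \alpha_1 + \cdots + \alpha_j$. Because entries $1, \ldots, k-1$ already occupy their super-standard positions in $T$, analysing which cells of $D(\alpha)$ have all of their left-neighbours and all of their below-neighbours filled shows that the only position available to $k$ other than $(r^*, c^*)$ is $(r^*+1, 1)$, and this alternative requires $c^* \ge 2$ (otherwise $(r^*, 1)$ would itself be unfilled). Let $L$ denote the entry of $T$ at $(r^*, c^*)$; then $L \ge k+1$.

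If $L = k+1$ then $i := k$ witnesses the required condition, since $k$ at $(r^*+1, 1)$ is strictly above ($r^*+1 > r^*$) and strictly left ($1 < c^*$) of $k+1 = L$ at $(r^*, c^*)$. If $L \ge k+2$, I claim $i := L-1$ works. Row-increase in row $r^*$ forces every entry of $T$ at $(r^*, c)$ with $c \ge c^*$ to be at least $L$, so $L-1$ must lie in a row strictly above $r^*$ and is thus strictly above $L$. Writing $(r_{L-1}, c_{L-1})$ for the position of $L-1$, I rule out $c_{L-1} \ge c^*$ with two column-increase arguments: if $c^* \le c_{L-1} \le \alpha_{r^*}$ then $(r^*, c_{L-1})$ has entry $\ge L$, violating column-increase against $(r_{L-1}, c_{L-1}) = L-1$; and if $c_{L-1} > \alpha_{r^*}$ then row-increase along row $r_{L-1}$ forces $(r_{L-1}, c^*) \le L-2$, violating column-increase in column $c^*$ against $(r^*, c^*) = L$. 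Hence $c_{L-1} < c^*$, and $L-1$ is strictly above and strictly left of $L$. This case analysis producing the strict above-left pair is the main technical obstacle.
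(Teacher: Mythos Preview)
Your proof is correct and follows essentially the same approach as the paper: both locate the earliest box where $T$ disagrees with $T^{{\rm sup}}_\alpha$, call its entry $j$ (your $L$), argue that $j-1$ lies strictly above and strictly left of $j$, and pull back via $\pi_{j-1}$ to a tableau closer to $T^{{\rm sup}}_\alpha$. Your verification of the above-left claim (the B1/B2 case split) is more careful than the paper's one-line appeal to row/column increase, and you organise the descent as induction on the scalar $d(T)$ rather than the paper's iterative ``fix the first disagreeing box, then the next'' description, but the substance is identical; note incidentally that your Case~A is subsumed by Case~B, since when $L=k+1$ one has $L-1=k$ sitting at $(r^*+1,1)$.
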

\begin{proof}
Let $S\in \SET(\alpha)$, $S\neq T^{{\rm sup}}_\alpha$. Let $\mathfrak{b}$ be the earliest box of $D(\alpha)$ in which $S$ and  $T^{{\rm sup}}_\alpha$ disagree, where the boxes are ordered by reading rows left to right, starting with the bottom row and proceeding upwards. Suppose $\mathfrak{b}$ has entry $j$ in $S$. Then in $S$, the entry $j-1$ must appear in a later box than $\mathfrak{b}$, and since entries increase along rows and up columns, $j-1$ is strictly above and strictly left of $j$ in $S$. Hence $S = \pi_{j-1}(S')$ for $S'\in \SET(\alpha)$ where $S'$ is $S$ with the entries $j$ and $j-1$ swapped. If the entry ($j-1$) of $\mathfrak{b}$ in $S'$ does not agree with the entry of $\mathfrak{b}$ in  $T^{{\rm sup}}_\alpha$, then repeat the process, resulting in $S''=\pi_{j-2}(S') \in \SET(\alpha)$ that has $j-2$ in $\mathfrak{b}$. Since the entry in $\mathfrak{b}$ decreases by one each time, eventually we obtain $S^*\in \SET(\alpha)$ which agrees with $T^{{\rm sup}}_\alpha$ on all boxes up to and including $\mathfrak{b}$, and $S$ is obtained from $S^*$ via a sequence of the operators. Repeating the process on the next box in which $S^*$ and $T^{{\rm sup}}_\alpha$ disagree, etc, eventually we obtain $S$ from $T^{{\rm sup}}_\alpha$ via a sequence of the operators.
\end{proof}

\begin{lemma}\label{lem:superstandard}
Suppose $T\in SET(\alpha)$ has the property that $\pi_i(T)=T$ for all $i$ such that $i\neq \alpha_1+\cdots + \alpha_r$ for any $1\le r \le \ell(\alpha)$. Then $T=T^{{\rm sup}}_\alpha$.
\end{lemma}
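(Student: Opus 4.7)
The plan is to prove this by induction on $m$, showing that in $T$ the entry $m$ occupies the same box as in $T^{{\rm sup}}_\alpha$ for every $m = 1, \ldots, n$. By Theorem~\ref{thm:actiononsetmodule}, the hypothesis $\pi_i(T)=T$ for $i \notin \Set(\alpha)$ is equivalent to the statement that $i$ is strictly left of $i+1$ in $T$, for every $i \in [n-1] \setminus \Set(\alpha)$.

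For the base case $m=1$, the row- and column-increase conditions force $1$ into a box with no box to its left in the same row and no box below it in its column; the only such box in the left-justified diagram $D(\alpha)$ is $(1,1)$, matching $T^{{\rm sup}}_\alpha$.

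For the inductive step, I would write $m = \alpha_1 + \cdots + \alpha_{r-1} + j$ with $1 \le j \le \alpha_r$, so that $T^{{\rm sup}}_\alpha$ places $m$ at $(r,j)$. Assuming entries $1, \ldots, m-1$ in $T$ are placed as in $T^{{\rm sup}}_\alpha$---so rows $1, \ldots, r-1$ are full and positions $(r,1), \ldots, (r, j-1)$ are filled---I would enumerate the boxes that can legally receive $m$: namely, those whose left neighbour in the row and below neighbour in the column (where these exist in $D(\alpha)$) are already among the filled positions. A short case analysis shows the only candidates are $(r,j)$, and, when $j \ge 2$ and $r < \ell(\alpha)$, also $(r+1,1)$. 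If $j=1$ the only candidate is $(r,j)$ and we are done. If $j \ge 2$, then $m - 1 = \alpha_1 + \cdots + \alpha_{r-1} + (j-1)$ lies strictly between the consecutive partial sums $\alpha_1 + \cdots + \alpha_{r-1}$ and $\alpha_1 + \cdots + \alpha_r$, so $m-1 \notin \Set(\alpha)$. The hypothesis then forces $m-1$ to be strictly left of $m$ in $T$. Since $m-1$ sits at column $j-1$ by the inductive hypothesis, the alternative placement $(r+1,1)$---which lies in column $1 \le j-1$---is ruled out, and $m$ must occupy $(r,j)$.

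The main obstacle is the enumeration of candidate boxes at each stage; this is routine (the unfilled boxes form an order ideal in the row/column poset and one just identifies its minimal elements), but needs to be done carefully so that the two-box list is genuinely exhaustive. Once that list is in hand, the strict-left condition supplied by the hypothesis immediately eliminates the undesired candidate, and the induction closes.
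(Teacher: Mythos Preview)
Your proof is correct and follows essentially the same inductive strategy as the paper's: both show entry by entry that $m$ must land in the box prescribed by $T^{{\rm sup}}_\alpha$, using the hypothesis to rule out the only alternative placement via the ``$m-1$ strictly left of $m$'' condition. Your version is simply more explicit in enumerating the candidate boxes $(r,j)$ and $(r+1,1)$, whereas the paper compresses this into the single observation that if $j+1$ is not in the current row then it is forced weakly left of $j$.
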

\begin{proof}
The first entry of the first row of $T$ must be $1$, by the increasing row and column conditions. Suppose the first $j$ entries of the first row of $T$ are $1,\ldots , j$ for some $1\le j \le \alpha_1-1$. If $j+1$ is not in the first row of $T$, the increasing row and column conditions force $j+1$ to be weakly left of $j$ and thus $\pi_j(T)\neq T$, contradicting the assumption. Hence the entries of the first row of $T$ are $1, \ldots , \alpha_1$. A similar argument then ensures the entries of the second row of $T$ are $\alpha_1+1, \ldots , \alpha_2$, and continuing thus we obtain $T=T^{{\rm sup}}_\alpha$.
\end{proof}

\begin{theorem}
Let $\alpha\vDash n$. Then $X_\alpha$ is an indecomposable $H_n(0)$-module.
\end{theorem}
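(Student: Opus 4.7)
The plan is to prove indecomposability by showing $\text{End}_{H_n(0)}(X_\alpha) \cong \mathbb{C}$, which as a field is local and thus has no nontrivial idempotents.

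The central intermediate claim is that the subspace $F \subseteq X_\alpha$ of vectors $v$ satisfying $\pi_i(v) = v$ for every $i$ not of the form $\alpha_1+\cdots+\alpha_r$ (with $1 \le r < \ell(\alpha)$) is one-dimensional, spanned by $T^{\mathrm{sup}}_\alpha$. Note that $T^{\mathrm{sup}}_\alpha \in F$ follows from Theorem~\ref{thm:actiononsetmodule}, since for such $i$, the entry $i$ is strictly left of $i+1$ in $T^{\mathrm{sup}}_\alpha$. The reverse containment is the main technical content.

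To prove the reverse containment, I would use the partial order $\preceq$ on $\SET(\alpha)$ together with a leading-term argument. Let $v = \sum c_S S \in F$ be nonzero, and let $S^*$ be maximal in $\{S : c_S \neq 0\}$ under $\preceq$. For a non-break index $i$, examine the coefficient of $S^*$ in the identity $\pi_i(v) = v$. The key observation is that $\pi_i$ is weakly decreasing with respect to $\preceq$: whenever $\pi_i(S) \neq 0$, we have $\pi_i(S) \preceq S$ directly from the definition of $\preceq$. Therefore the only element $S$ in the support of $v$ that can contribute to the $S^*$-coefficient of $\pi_i(v)$ is $S^*$ itself (any other contributor would need to satisfy $S \succeq S^*$, forcing $S = S^*$ by maximality). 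If $\pi_i(S^*)$ were $0$ or $s_i(S^*) \prec S^*$, the LHS coefficient of $S^*$ would be $0$, contradicting $c_{S^*} \neq 0$. Hence $\pi_i(S^*) = S^*$ for every non-break $i$, and Lemma~\ref{lem:superstandard} forces $S^* = T^{\mathrm{sup}}_\alpha$. Then $v - c_{T^{\mathrm{sup}}_\alpha} T^{\mathrm{sup}}_\alpha$ lies in $F$, has strictly smaller support, and does not contain $T^{\mathrm{sup}}_\alpha$; repeating the argument forces it to vanish, so $v = c_{T^{\mathrm{sup}}_\alpha} T^{\mathrm{sup}}_\alpha$.

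With the claim established, the theorem follows quickly. Given any endomorphism $\phi$ of $X_\alpha$, $H_n(0)$-equivariance gives $\pi_i \phi(T^{\mathrm{sup}}_\alpha) = \phi(\pi_i T^{\mathrm{sup}}_\alpha) = \phi(T^{\mathrm{sup}}_\alpha)$ for every non-break $i$, so $\phi(T^{\mathrm{sup}}_\alpha) \in F$, and the claim yields $\phi(T^{\mathrm{sup}}_\alpha) = c \cdot T^{\mathrm{sup}}_\alpha$ for some $c \in \mathbb{C}$. By Lemma~\ref{lem:cyclicallygenerated}, $T^{\mathrm{sup}}_\alpha$ generates $X_\alpha$ cyclically, so $\phi = c \cdot \mathrm{id}$. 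Therefore $\text{End}_{H_n(0)}(X_\alpha) \cong \mathbb{C}$, which has only the trivial idempotents $0$ and $1$, and $X_\alpha$ is indecomposable. The main obstacle is establishing one-dimensionality of $F$; the inductive leading-term argument is clean only because the partial order $\preceq$ is compatible with the $\pi_i$-action, which is exactly what the weight-vector proof that $\preceq$ is antisymmetric is designed to guarantee.
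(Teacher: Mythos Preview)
Your proof is correct and follows essentially the same strategy as the paper: both show that any module endomorphism must send $T^{\mathrm{sup}}_\alpha$ to a scalar multiple of itself, invoking Lemma~\ref{lem:superstandard} to identify $T^{\mathrm{sup}}_\alpha$ and Lemma~\ref{lem:cyclicallygenerated} to conclude. The only difference is in the mechanism for killing the other coefficients. You run a leading-term argument under the partial order $\preceq$, using that $\pi_i(S)\preceq S$ to isolate the maximal support element; the paper instead fixes an $S\neq T^{\mathrm{sup}}_\alpha$, picks an $i$ with $\pi_i(T^{\mathrm{sup}}_\alpha)=T^{\mathrm{sup}}_\alpha$ but $\pi_i(S)\neq S$, and uses idempotence $\pi_i^2=\pi_i$ directly to show no $S'$ can satisfy $\pi_i(S')=S$. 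Your version yields the slightly stronger statement $\mathrm{End}_{H_n(0)}(X_\alpha)\cong\mathbb{C}$, whereas the paper works only with idempotent endomorphisms, but the underlying idea is the same.
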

\begin{proof}
Let $f$ be an idempotent module endomorphism of $X_\alpha$. We will show $f$ is either zero or the identity, which by \cite[Proposition 3.1]{Jacobson} implies $X_\alpha$ is indecomposable. Suppose
\[f(T^{{\rm sup}}_\alpha) = \sum_{T\in \SET(\alpha)}b_TT.\]
It follows from Lemma~\ref{lem:superstandard} that for any $S\in \SET(\alpha)$ such that $S \neq T^{{\rm sup}}_\alpha$, there exists some $1\le i \le n-1$ such that $\pi_i(T^{{\rm sup}}_\alpha) = T^{{\rm sup}}_\alpha$ but $\pi_i(S)\neq S$.

For such an $i$, we have
 \[f(T^{{\rm sup}}_\alpha) = f(\pi_i(T^{{\rm sup}}_\alpha)) = \pi_if(T^{{\rm sup}}_\alpha) = \pi_i( \sum_{T\in \SET(\alpha)}b_TT) =  \sum_{T\in \SET(\alpha)}b_T\pi_i(T).\]
 The coefficient of $S\neq T^{{\rm sup}}_\alpha$ on the right-hand side of the expression above is zero, since if there was $S'\in \SET(\alpha)$ such that $\pi_i(S')=S$, we would have $\pi_i(S') = \pi_i^2(S') = \pi_i(S) \neq S$, a contradiction. 
 
 Therefore $b_S = 0$ for all $S\neq T^{{\rm sup}}_\alpha$, and we have $f(T^{{\rm sup}}_\alpha) = b_{T^{{\rm sup}}_\alpha}T^{{\rm sup}}_\alpha$. Since $f^2=f$, we must have $b_{T^{{\rm sup}}_\alpha}^2 = b_{T^{{\rm sup}}_\alpha}$, which forces $b_{T^{{\rm sup}}_\alpha}=0$ or $b_{T^{{\rm sup}}_\alpha}=1$. Since $X_\alpha$ is cyclically generated by $T^{{\rm sup}}_\alpha$ (Lemma~\ref{lem:cyclicallygenerated}), we conclude $f$ is either zero or the identity on $X_\alpha$, as required.
\end{proof}

%
%

\bibliographystyle{amsalpha} 
\bibliography{ExtendedSchurModule}

\end{document}